\documentclass[11pt]{amsart}

\usepackage[margin=1.15in]{geometry}
\usepackage{amsmath,amssymb,amsthm,mathtools}
\usepackage{microtype}
\usepackage[colorlinks=true,linkcolor=blue,citecolor=blue,urlcolor=blue]{hyperref}

\numberwithin{equation}{section}

\newtheorem{theorem}{Theorem}[section]
\newtheorem{proposition}[theorem]{Proposition}

\newtheorem{corollary}[theorem]{Corollary}
\theoremstyle{remark}
\newtheorem{remark}[theorem]{Remark}

\newcommand{\R}{\mathbb R}
\newcommand{\C}{\mathbb C}
\newcommand{\D}{\mathbb D}
\newcommand{\SH}{\operatorname{SH}}
\newcommand{\PSH}{\operatorname{PSH}}

\title[Restricted Perron envelopes]{Restricted Perron envelopes and
\\ quasibounded functions}

\author{Frank Wikstr\"om}
\address{Centre for Mathematical Sciences, Lund University,
Box 118, SE-221 00 Lund, Sweden}
\email{frank.wikstrom@math.lth.se}

\subjclass[2020]{Primary 31C05, 32U05; Secondary 31D05, 32U15}
\keywords{Perron envelope, subharmonic function, plurisubharmonic function,
quasibounded function, B-regular domain}

\begin{document}

\begin{abstract}
Let $f$ be an upper semicontinuous function on a domain and consider the Perron
envelope formed only with globally bounded-above subharmonic, or
plurisubharmonic, minorants of $f$.  Its upper semicontinuous regularization
agrees with the envelope outside a polar, respectively pluripolar, set, but it
is not immediate whether regularization is actually necessary.  This question
arises naturally when comparing quasiboundedness with quasiboundedness
quasi-everywhere.  We prove that in classical potential theory the restricted
envelope is automatically upper semicontinuous, provided its regularization is
subharmonic.  The proof uses a local harmonic correction obtained from Brelot's
resolutivity theorem. We then show that the corresponding pluripotential
statement fails sharply by constructing a bounded B-regular complete Hartogs
domain in $\C^2$ and a positive, continuous, unbounded pluriharmonic function
$W$ whose envelope of bounded plurisubharmonic minorants is discontinuous along
an analytic disc. The function $W$ nevertheless admits a positive
plurisuperharmonic majorant growing faster than $W$, and it is an increasing
limit of bounded plurisubharmonic functions outside a pluripolar set. Thus the
exceptional pluripolar set cannot in general be removed, even under these strong
growth and approximation properties and on a B-regular domain.
\end{abstract}

\maketitle

\section{Introduction}

For the usual sub- or plurisubharmonic Perron envelope with an upper
semicontinuous obstacle, upper semicontinuity comes almost for free: under the
standard local upper-boundedness hypothesis, the upper semicontinuous
regularization is again admissible and therefore coincides with the original
envelope. But what happens if the competitors are required to be globally
bounded above? This natural restriction arises when studying unbounded
plurisubharmonic functions, particularly in connection with Perron--Bremermann
constructions and the complex Monge--Ampère Dirichlet problem. It also breaks
the familiar argument, since the regularization need no longer belong to the
restricted family. Thus a basic question becomes nontrivial: is the restricted
pointwise envelope still upper semicontinuous?

This question arose in connection with studying quasibounded plurisubharmonic
functions~\cite{NilssonWikstrom:21}. We have not been able to find a treatment
of this pointwise question, even in the subharmonic case.

Let us briefly recall the relevant notions. A harmonic function is called
quasibounded if it is the increasing limit of bounded harmonic functions.  The
notion goes back to Parreau~\cite{Parreau:51}; see also the work of Arsove and
Leutwiler~\cite{ArsoveLeutwiler:74}.  An analogous notion for plurisubharmonic
functions was introduced in \cite{NilssonWikstrom:21}.

A basic difference between the linear and pluripotential settings is that the
natural bounded plurisubharmonic approximants obtained there need only converge
outside a pluripolar set. This leads naturally to the pointwise envelope
question studied in this paper; related continuity questions for unbounded
Perron--Bremermann envelopes were studied in~\cite{Nilsson:22}.

Let $\Omega$ be a domain in $\R^d$ and let
$f\colon\Omega\to[-\infty,+\infty]$ be upper semicontinuous.  We consider
the restricted Perron envelope
\begin{equation}\label{eq:real-envelope}
 P_b f(x)=\sup\{u(x):u\in\SH(\Omega),\ u\leq f,
                         \ \sup_\Omega u<+\infty\}.
\end{equation}
Finite maxima make the defining family directed.  Under the usual local
upper-boundedness hypothesis, the Brelot--Cartan theorem and Choquet's lemma
show that $(P_bf)^*$ is subharmonic and
\begin{equation}\label{eq:qe-agreement}
                 P_bf=(P_bf)^* \quad\text{quasi-everywhere}.
\end{equation}
The question is whether equality holds everywhere.

Our first result gives a positive answer in the subharmonic setting without any
regularity assumption on~$\partial\Omega$.

\begin{theorem}[Subharmonic envelope theorem]\label{thm:subharmonic} Let
$\Omega\subset\R^d$ be a domain ($d \ge 2$) and let $f$ be upper semicontinuous.
Set $V=P_bf$.  If $W=V^*$ is subharmonic, then
\[
                         V=W.
\]
The conclusion is unchanged if the defining family in
\eqref{eq:real-envelope} is restricted to functions which are bounded on
both sides.
\end{theorem}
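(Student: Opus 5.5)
We always have $V\le W$, so the task is to show $W(x_0)\le V(x_0)$ at a fixed point $x_0\in\Omega$. Since every competitor satisfies $u\le f$ and $f$ is upper semicontinuous, we have $V\le f$ and hence $W=V^*\le f^*=f$. The plan is to build, for each $\varepsilon>0$, a competitor $v$ that is subharmonic on $\Omega$, bounded above and $\le f$, with $v(x_0)\ge W(x_0)-\varepsilon$. I would obtain $v$ by a local harmonic correction of an approximating competitor on a small ball around $x_0$.

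\emph{Step 1: approximants.} By Choquet's lemma and the Brelot--Cartan theorem (the facts behind \eqref{eq:qe-agreement}), pick an increasing sequence $u_j$ in the defining family with $(\sup_j u_j)^*=W$ and $\sup_j u_j=W$ outside a polar set $E$. Replacing $u_j$ by $\max(u_j,-j)$ is not allowed, since that may violate $u_j\le f$ where $f=-\infty$. I would therefore postpone the two-sided boundedness claim to the end.

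\emph{Step 2: choice of ball and harmonic correction.} The spherical means of $W$ decrease to $W(x_0)$, so choose $r>0$ with $\overline B$, $B=B(x_0,r)$, contained in $\Omega$ and $\mathcal P_B[W](x_0)<W(x_0)+\varepsilon$, where $\mathcal P_B$ is the Poisson integral. Since $E\cap\partial B$ has zero harmonic measure, Brelot's resolutivity theorem (monotone convergence of harmonic measure integrals) gives that $h_j:=\mathcal P_B[u_j]$ increases to $\mathcal P_B[W]$ on $B$. In particular $h_j(x_0)\ge W(x_0)-\varepsilon$ for large $j$. Set $v_j=\max(u_j,h_j-2\varepsilon)$ on $B$ and $v_j=u_j$ off $B$. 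Because $u_j$ is usc on $\partial B$, we have $\limsup_{y\to\xi}h_j(y)\le u_j(\xi)$ for every $\xi\in\partial B$. I would use this together with a slightly smaller concentric sphere, where $u_j$ is quasi-continuous, to show that $h_j-2\varepsilon<u_j$ near $\partial B$ off a set of arbitrarily small capacity. Alternatively, I would replace $h_j$ by its Perron solution on a slightly shrunken ball with data $u_j-\varepsilon$. Either way $v_j$ becomes subharmonic and bounded above.

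\emph{Step 3 (main obstacle): the constraint $v_j\le f$.} On $B$ we need $h_j-2\varepsilon\le f$. We know $h_j\le \mathcal P_B[W]$ and $f\ge W$, but $\mathcal P_B[W]$ can exceed $W$ pointwise. The plan is to exploit $\mathcal P_B[W](x_0)<W(x_0)+\varepsilon$. Harnack's inequality applied to the positive harmonic function $\mathcal P_B[W]-W(x_0)+\varepsilon$, on a ball $B'$ small relative to $B$, controls $\mathcal P_B[W]$ from above by roughly $W(x_0)+2\varepsilon$ on $B'$. On the other hand $f$ is only bounded below by $W$, and lower bounds for $W$ near $x_0$ fail exactly at the polar set. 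I would therefore restrict the correction to a region where $h_j-2\varepsilon\le W$ holds by a maximum-principle comparison of the harmonic $h_j$ with the subharmonic $W$. Concretely, I would take $\max(u_j,h_j-2\varepsilon)$ only on the component of $\{h_j-2\varepsilon>u_j\}$ containing $x_0$ and argue that $W\ge h_j-2\varepsilon$ there. This is the step I expect to need the genuine new idea.

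Finally, the two-sided version follows by checking that $v_j$ can be replaced by $\max(v_j,-N)$ wherever $f>-N$. Since $W(x_0)>-\infty$ is the only nontrivial case, this truncation does not affect the value at $x_0$.
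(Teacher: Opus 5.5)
Steps~1 and~2 are essentially fine. The pasting in Step~2 needs no quasi-continuity argument and no shrinking of the ball: you already have $\limsup_{y\to\xi}(h_j-2\varepsilon)(y)\le u_j(\xi)$ at every $\xi\in\partial B$, which is exactly what the pasting lemma requires.

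The genuine gap is Step~3, which you flag yourself, and the repair you suggest cannot work. On a component $U$ of $\{h_j-2\varepsilon>u_j\}$ you do know that $h_j-2\varepsilon\le u_j\le W$ on $\partial U$. However, $W-h_j$ is subharmonic, so the maximum principle carries \emph{upper} bounds for $W-h_j$ into $U$. It never gives the lower bound $W\ge h_j-2\varepsilon$ that you need. The obstruction is structural, not technical. Suppose, for instance, that $f=W$ and the polar set $\{W=-\infty\}$ is dense near $x_0$, while $W(x_0)>-\infty$. Then every nonempty open set on which your competitor equals the finite harmonic function $h_j-2\varepsilon$ contains points where $f=-\infty$. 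Any competitor that improves on $u_j$ near $x_0$ must inherit the $-\infty$ set of $W$. So a harmonic approximation of $W$ is the wrong object to paste in, and no choice of component or radius fixes this.

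The missing idea, which is what the paper does, is to correct $W$ itself downward by a nonpositive term, instead of correcting $u_j$ upward.
\begin{itemize}
\item Choose a ball $B$ centred at $x_0$ with a radius such that $W$ and all $u_j$ are integrable on $\partial B$ and $u_j\to W$ surface-a.e.\ there.
\item Set $g_j=u_j-W\le 0$ on $\partial B$, with $g_j=-\infty$ where $u_j=-\infty<W$.
\item Pick $h_j$ in the lower Perron class of $g_j$ with $h_j(x_0)>H_Bg_j(x_0)-1/j$. Here $H_Bg_j(x_0)=\int g_j\,d\omega_B^{x_0}$ by Brelot's resolutivity theorem for integrable extended-real data.
\item Then $s_j=W+h_j$ is subharmonic on $B$, and $s_j\le W\le f$ automatically because $h_j\le 0$.
\item On $\partial B$ we have $\limsup_{x\to\xi}s_j(x)\le W(\xi)+g_j(\xi)\le u_j(\xi)$. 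Hence the function equal to $\max(u_j,s_j)$ on $B$ and to $u_j$ off $B$ is an admissible competitor. It is bounded above by $\max(\sup_\Omega u_j,\sup_{\overline B}W)$.
\item Its value at $x_0$ is at least $W(x_0)+H_Bg_j(x_0)-1/j$. This tends to $W(x_0)$ by dominated convergence, since $g_1\le g_j\le 0$ and $g_j\nearrow 0$ a.e.
\end{itemize}
This makes your small-radius and Harnack considerations unnecessary.

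For the two-sided version, taking $\max(v_j,-N)$ only where $f>-N$ does not in general give a subharmonic function. Instead, take the $u_j$ from the two-sided family and observe that $\max(u_j,s_j)\ge u_j$ inherits the lower bound.
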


Similarly, for plurisubharmonic functions on a domain $\Omega\subset\C^n$, write
\begin{equation}\label{eq:complex-envelope}
 P_b^{\mathrm{psh}}f(z)
 =\sup\{u(z):u\in\PSH(\Omega),\ u\leq f,
                         \ \sup_\Omega u<+\infty\}.
\end{equation}
The direct analogue of Theorem~\ref{thm:subharmonic} is false, and it remains
false under strong assumptions on both the domain and the obstacle.

It also follows from Theorem~\ref{thm:subharmonic} that if a subharmonic
function \(W\) is the quasi-everywhere limit of an increasing sequence of
bounded-above subharmonic functions, then \(W\) is recovered pointwise as the
supremum of all its bounded-above subharmonic minorants. The corresponding
assertion for plurisubharmonic functions is false, as our next result shows.

\begin{theorem}[Plurisubharmonic counterexample]\label{thm:counterexample}
There exist a bounded B-regular domain $\Omega\subset\C^2$ and a positive,
continuous, unbounded, tame pluriharmonic function $W$ on $\Omega$ such that
\[
 V=P_b^{\mathrm{psh}}W
\]
is not upper semicontinuous.  More precisely, there is an analytic disc
$H\subset\Omega$ such that
\[
 V=W\quad\text{on }\Omega\setminus H,
 \qquad V=0<W\quad\text{on }H.
\]
In particular, $V^*=W$.  The same conclusion holds if all competitors in
\eqref{eq:complex-envelope} are required to be bounded on both sides. Moreover,
$W$ is pluri-quasibounded in the sense of~\cite{Nilsson:26}: it admits a
positive plurisuperharmonic strong majorant.
\end{theorem}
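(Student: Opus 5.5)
The plan is to make $W$ depend only on $z$, to let the fibres of a Hartogs domain over the unit disc collapse fast enough that $\log|w|$ can absorb the growth of $W$, and to take $H=\{w=0\}$. Let $h(z)=\frac{1+z}{1-z}$ on $\D$ and $W(z,w)=\operatorname{Re}h(z)$, the Poisson kernel with pole at $1$. Then $W$ is positive, continuous, pluriharmonic and unbounded. Set
\[
\Omega=\{(z,w): z\in\D,\ |w|<e^{-W(z)^2}\},\qquad H=\{(z,0):z\in\D\}.
\]
The domain $\Omega$ is bounded, since the fibre radius is at most $1$. It is a complete Hartogs domain, and it is pseudoconvex because $W^2$ is subharmonic, being a convex increasing function of a positive harmonic function. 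The fibres shrink superexponentially as $z\to1$.

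\textbf{Step 1: $V\le 0$ on $H$.} Let $u\in\PSH(\Omega)$ with $u\le W$ and $\sup_\Omega u<\infty$. Then $s(z)=u(z,0)$ is subharmonic and bounded above on $\D$, with $s\le P$, where $P=\operatorname{Re}h$. Since $P$ has boundary values $0$ a.e.\ on $\partial\D$, $s$ has nontangential boundary limsup $\le 0$ a.e. A bounded-above subharmonic function on $\D$ is majorized by the Poisson integral of its boundary data. Equivalently, $s-\varepsilon P\le 0$ for every $\varepsilon>0$ by the Phragmén--Lindelöf principle on the disc, using that $P$ is the minimal harmonic function at $1$. Letting $\varepsilon\to0$ gives $s\le 0$. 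Conversely $u\equiv0$ is admissible and bounded on both sides, so $V=0<W$ on $H$.

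\textbf{Step 2: $V=W$ off $H$.} For $c>0$ put $u_c=W+c\log|w|$. This is plurisubharmonic and $u_c\le W$. On $\Omega$ we have $\log|w|<-W^2$, so
\[
u_c\le W-cW^2\le \tfrac1{4c},
\]
hence $u_c$ is bounded above. The function $\max(u_c,0)$ is admissible, and it is bounded on both sides because $u_c\le 1/(4c)$ and $\max(u_c,0)\ge0$. As $c\downarrow0$ it increases to $W$ at every point with $w\ne0$. Hence $V=W$ on $\Omega\setminus H$, even for the doubly bounded family. Since $H$ has empty interior, $V^*=W$, and $V$ is not upper semicontinuous along $H$.

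\textbf{Step 3: the strong majorant.} Take $S=W-\log|w|$. It is positive and plurisuperharmonic (it equals $+\infty$ on $H$), and $S\ge W+W^2$, so $S/W\to\infty$ wherever $W\to\infty$. I would then check this against whatever precise definition of strong majorant and of \emph{tame} is used in \cite{Nilsson:26}. If necessary I would replace $S$ by a finite-valued variant such as $W-\log(|w|+e^{-2W^2})$.

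\textbf{Step 4 (main obstacle): B-regularity.} I expect this step to be the hardest. I would use Sibony's characterization and produce, for each boundary point, a continuous plurisubharmonic barrier; equivalently, a continuous plurisubharmonic function on $\overline\Omega$ that is strictly plurisubharmonic and vanishes on $\partial\Omega$.

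The boundary points split into three types.
\begin{itemize}
\item Points over $|z|<1$ on the graph $|w|=e^{-W^2}$. Here the defining function $\log|w|+W^2$ is continuous and plurisubharmonic, and adding $|z|^2+|w|^2-2$ gives strict plurisubharmonicity.
\item Points over $|z|=1$, $z\ne1$. Here the fibres stay of positive size and the barrier $|z|^2-1$ works.
\item The point $(1,0)$. Here I would use that $-P$ is a harmonic barrier at $1$ in $\D$, that $|w|\to0$ uniformly as $z\to1$, and combine these into barriers such as $\max(|z-1|^2\text{-type},\,|w|^2)$-adjusted functions.
\end{itemize}
What must be checked carefully is that the collapsing fibre produces no non-B-regular boundary behaviour. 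Here I would invoke the criterion that a bounded complete Hartogs domain with continuous subharmonic $-\log r$ on the base, over a B-regular base, is B-regular provided $\log|w|+W^2$ extends continuously (with value $-\infty$ allowed) to $\partial\Omega$. If that criterion does not apply directly, I would damp the growth and use $\varphi=W^2$ only where $W$ is large, keeping $\varphi=W$ elsewhere, to control the behaviour at $(1,0)$.
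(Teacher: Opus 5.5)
Your Steps 1--3 are essentially the paper's argument with the roles of $z$ and $w$ swapped. Step~4, however, cannot be completed: the domain you chose is not B-regular, and no choice of barrier repairs this.

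The fibres shrink superexponentially as $z\to1$ only under nontangential approach. $P$ is constant on the horocycles $\{P=c\}$, which pass through $1$. Along such horocycles with $c\downarrow0$, the fibre radius $e^{-P(z)^2}$ tends to $1$. The same happens, for unrestricted approach, at every $\zeta\in\partial\D\setminus\{1\}$, since $P\to0$ there. Hence $\partial\Omega\supset\partial\D\times\D$, so the boundary contains the analytic discs $\{\zeta\}\times\D$ for every $\zeta\in\partial\D$.

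This is incompatible with B-regularity. Choose $z_n\to\zeta$ with fibre radii tending to $1$ and apply the sub-mean-value inequality on the slices $\{z_n\}\times\{|w|=1/2\}$. This shows that a plurisubharmonic $b$ with $b\to0$ at $(\zeta,0)$ cannot have $\limsup b<0$ at every point of $\{\zeta\}\times\{|w|=1/2\}$. So there is no strong barrier at $(\zeta,0)$. Equivalently, a function plurisubharmonic in $\Omega$ and continuous on $\overline\Omega$ restricts to a subharmonic function on each such disc. Consequently:
\begin{itemize}
\item your barrier $|z|^2-1$ vanishes on all of $\partial\D\times\overline\D$, so it is not a strong barrier anywhere;
\item your treatment of $(1,0)$ relies on a uniform collapse of $|w|$ that does not occur;
\item the Hartogs criterion you intend to invoke is false, as the bidisc shows (B-regular base, $r\equiv1$).
\end{itemize}

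The missing idea is to force the fibres to collapse over \emph{every} point of the base circle, for unrestricted approach. Replace the exponent $W^2$ by $\Phi=P^2+(1-|z|^2)^{-1}$. This $\Phi$ is smooth, strictly subharmonic, satisfies $\Phi\ge P^2$, and tends to $+\infty$ at every point of $\partial\D$. Then $\partial\Omega$ is the union of the smooth strictly pseudoconvex hypersurface $\{|w|=e^{-\Phi(z)}\}$ and the circle $\partial\D\times\{0\}$. At $(\zeta,0)$ the pluriharmonic function $\operatorname{Re}(\bar\zeta z)-1$ is a strong barrier, because $(\zeta,0)$ is the only point of $\overline\Omega$ lying over $\zeta$. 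Sibony's criterion then gives B-regularity. Your Steps 1--3 survive unchanged, since they only use $\log|w|<-W^2$; this is the paper's construction.

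Two smaller points:
\begin{itemize}
\item In Step~1, $s-\varepsilon P$ need not have $\limsup\le0$ at $1$, for the same horocycle reason. Use the extended maximum principle with the polar exceptional set $\{1\}$ instead, for example via $s+\varepsilon\log(|1-z|/2)$.
\item The finite-valued fallback in Step~3 cannot exist. Since $W-\varepsilon S$ is admissible for every $\varepsilon>0$, a finite-valued positive plurisuperharmonic strong majorant would force $V=W$ on $H$, contradicting Step~1. The pole of $S$ along $H$ is unavoidable.
\end{itemize}
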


The counterexample gives a positive pluriharmonic function which is an
increasing limit of bounded plurisubharmonic functions off an analytic disc,
but which cannot be recovered on that disc by taking the supremum of
\emph{all} bounded plurisubharmonic minorants.  Hence it separates pointwise
bounded approximation from bounded approximation quasi-everywhere.  The
rapid collapse of the Hartogs fibres is what makes these two requirements
compatible.

The proof of Theorem~\ref{thm:subharmonic} occupies Section~2.  The Hartogs
domain, the calculation of its restricted envelope, and the verification of
B-regularity are given in Section~3.

\section{The subharmonic envelope}\label{sec:subharmonic}

We use $u^*$ for upper semicontinuous regularization.  Polar sets and
quasi-everywhere statements are understood in the classical sense.  We use
only standard facts about subharmonic functions; general references are
\cite{ArmitageGardiner:01,Ransford:95}.

\begin{proof}[Proof of Theorem~\ref{thm:subharmonic}]
  If $W \equiv -\infty$, the conclusion is immediate. Hence
  assume $W \not\equiv -\infty$. Put $W=V^*$.  Since $V\leq f$ and $f$ is upper semicontinuous,
\begin{equation}\label{eq:W-below-f}
                             W\leq f.
\end{equation}
By Choquet's lemma, followed by taking finite maxima, there is an increasing
sequence $(u_j)$ in the defining family such that
\begin{equation}\label{eq:choquet-sequence}
                    u_j\nearrow W\quad\text{quasi-everywhere}.
\end{equation}

Fix $p\in\Omega$ with $W(p)>-\infty$.  Choose a ball
$B=B(p,r)\Subset\Omega$ with a generic radius.  We may require that the
restrictions of $W,u_1,u_2,\ldots$ to $\partial B$ are integrable with respect
to surface measure and that \eqref{eq:choquet-sequence} holds surface-almost
everywhere on $\partial B$.  Indeed, subharmonic functions are locally
integrable, polar sets have Lebesgue measure zero, and the assertion follows
from polar coordinates for almost every $r$.

On $\partial B$, set
\begin{equation}\label{eq:boundary-difference}
                         g_j=u_j-W
\end{equation}
at points where the difference is defined.  On the null exceptional set we
assign the extended values which enforce the inequality $W+g_j\leq u_j$; in
particular, we put $g_j=-\infty$ where $u_j=-\infty<W$.  Changing the boundary
values on this surface-null set does not change their $L^1$-class. We then
have
\begin{equation}\label{eq:gj-monotone}
              g_1\leq g_j\leq0,
              \qquad g_j\nearrow0\quad\text{almost everywhere}.
\end{equation}

Brelot's resolutivity theorem for integrable extended-real boundary data
\cite[Section~12, especially~(9)]{Brelot:44} gives
\begin{equation}\label{eq:brelot-integral}
 H_Bg_j(p)=\int_{\partial B}g_j\,d\omega_B^p,
\end{equation}
where $H_Bg_j$ is the Perron--Wiener--Brelot solution. Since $B$ is centered
at $p$, $\omega_B^p$ since $g_1$ is integrable, dominated convergence in
\eqref{eq:gj-monotone} yields
\begin{equation}\label{eq:hg-to-zero}
                             H_Bg_j(p)\longrightarrow0.
\end{equation}

Let $\mathcal L(g_j)$ be the lower Perron class: it consists of the
subharmonic functions $h$ on $B$ which are bounded above and satisfy
\[
       \limsup_{B\ni x\to\xi}h(x)\leq g_j(\xi)
       \qquad(\xi\in\partial B).
\]
Choose $h_j\in\mathcal L(g_j)$ such that
\begin{equation}\label{eq:hj-value}
                         h_j(p)>H_Bg_j(p)-\frac1j.
\end{equation}
The maximum principle and $g_j\leq0$ imply that $h_j\leq0$ on $B$.
Consequently,
\[
                            s_j=W+h_j
\]
is subharmonic on $B$ and satisfies $s_j\leq W$.  At every boundary point
where $W$ is finite,
\[
 \limsup_{B\ni x\to\xi}s_j(x)
 \leq W(\xi)+g_j(\xi)\leq u_j(\xi).
\]
If $W(\xi)=-\infty$, the same boundary inequality is automatic.  The choice
$g_j=-\infty$ handles points where $u_j(\xi)=-\infty<W(\xi)$.

The pasting lemma now shows that
\begin{equation}\label{eq:pasted-function}
 \widetilde u_j=
 \begin{cases}
   \max\{u_j,s_j\},&\text{on }B,\\
   u_j,&\text{on }\Omega\setminus B,
 \end{cases}
\end{equation}
is subharmonic on $\Omega$.  Moreover,
\[
                     \widetilde u_j\leq W\leq f.
\]
It is globally bounded above: outside $B$ it equals $u_j$, while on $B$ it
is bounded above by the upper bound of $W$ on $\overline B$.  If $u_j$ is
bounded below, then $\widetilde u_j\geq u_j$, so two-sided boundedness is
also preserved.  Thus $\widetilde u_j$ is an admissible minorant in either
version of the theorem.

Finally, \eqref{eq:hj-value} and \eqref{eq:hg-to-zero} give
\[
 \widetilde u_j(p)
 \geq W(p)+h_j(p)
 >W(p)+H_Bg_j(p)-\frac1j
 \longrightarrow W(p).
\]
It follows that $V(p)\geq W(p)$.  The opposite inequality is automatic.
Points at which $W=-\infty$ are trivial, and hence $V=W$ throughout
$\Omega$.
\end{proof}

\begin{corollary}\label{cor:qe-minorants}
Let $W\in\SH(\Omega)$.  Suppose that there is an increasing sequence of
globally bounded-above subharmonic functions $u_j\leq W$ such that
$u_j\to W$ quasi-everywhere.  Then
\[
 W(x)=\sup\{u(x):u\in\SH(\Omega),\ u\leq W,
                         \ \sup_\Omega u<+\infty\}
 \qquad(x\in\Omega).
\]
The same holds with two-sided boundedness throughout.
\end{corollary}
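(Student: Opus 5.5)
The plan is to apply Theorem~\ref{thm:subharmonic} with the obstacle $f=W$. Since $W\in\SH(\Omega)$, it is upper semicontinuous, so it is an admissible obstacle. Set $V=P_bW$, the envelope \eqref{eq:real-envelope} with $f=W$. It then suffices to show that $V^*=W$. Once we know this, $V^*$ is subharmonic, the theorem gives $V=V^*=W$ everywhere, and that is exactly the claimed identity.

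For the identification $V^*=W$ we argue by two inequalities. Every competitor $u$ satisfies $u\le W$, so $V\le W$, and because $W$ is upper semicontinuous, $V^*\le W$. Conversely, each $u_j$ belongs to the defining family, so $u_j\le V\le V^*$ for all $j$. Letting $j\to\infty$ gives $W\le V^*$ outside the polar set $E$ where $u_j\not\to W$. Thus $V^*$ and $W$ are two subharmonic functions that agree quasi-everywhere, with $V^*\le W$ everywhere. To get equality at the points of $E$, we use that subharmonic functions are determined by their values off a set of Lebesgue measure zero: $W(x)=\lim_{r\to0}$ of the averages of $W$ over $B(x,r)$, and similarly for $V^*$. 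Since polar sets are Lebesgue-null, these averages coincide, and hence $V^*=W$ everywhere.

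One point needs checking: the theorem requires $V^*$ to be subharmonic rather than $\equiv+\infty$ or merely upper semicontinuous. This follows from $V\le W$, which makes the family locally uniformly bounded above, so the Brelot--Cartan theorem applies. If $W\equiv-\infty$, everything is trivial.

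For the two-sided bounded version, we run the same argument with the family of competitors bounded on both sides, assuming the $u_j$ are bounded on both sides. The theorem explicitly covers this restricted family, and the inequality $u_j\le V$ still holds because each $u_j$ is admissible in the restricted family. The step that needs the most care is the identification $V^*=W$ on the polar exceptional set. The averaging argument above handles it; alternatively, one can use the fact that $W\le V^*$ quasi-everywhere between subharmonic functions implies $W\le V^*$ everywhere.
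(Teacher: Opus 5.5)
Your proof is correct and takes essentially the paper's approach: the paper simply notes that the regularization of the envelope is $W$ and invokes Theorem~\ref{thm:subharmonic} with $f=W$. You also supply the justification the paper leaves implicit, namely the sandwich $u_j\le P_bW\le W$ and the upgrade from quasi-everywhere (hence almost-everywhere) equality of the subharmonic functions $(P_bW)^*$ and $W$ to equality everywhere via ball averages.
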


\begin{proof}
The upper semicontinuous regularization of the envelope on the right is
$W$, so Theorem~\ref{thm:subharmonic} applies.
\end{proof}

\begin{remark}\label{rem:linearity}
The harmonic correction is the only place where linearity is used.  The sum
$W+h_j$ remains subharmonic, whereas adding a merely subharmonic correction
to a plurisubharmonic function need not preserve plurisubharmonicity.  The
counterexample below shows that this is not only a limitation of the proof.
\end{remark}

\section{A B-regular pluripotential counterexample}\label{sec:counterexample}

Let
\begin{equation}\label{eq:poisson-kernel}
 P(w)=\frac{1-|w|^2}{|1-w|^2}
     =\operatorname{Re}\frac{1+w}{1-w},
 \qquad w\in\D,
\end{equation}
be the Poisson kernel of the unit disc with pole at $1$.  Define
\begin{equation}\label{eq:Phi}
                 \Phi(w)=P(w)^2+\frac1{1-|w|^2}.
\end{equation}
The function $\Phi$ is smooth and strictly subharmonic on $\D$, and
$\Phi(w)\to+\infty$ as $w\to\partial\D$.

Consider the complete Hartogs domain
\begin{equation}\label{eq:Omega}
 \Omega=\{(z,w)\in\C\times\D:|z|<e^{-\Phi(w)}\}.
\end{equation}
It is bounded and pseudoconvex: away from $z=0$ it is given by the
plurisubharmonic inequality
\[
                         \log|z|+\Phi(w)<0.
\]
We first verify the claimed boundary regularity.

\begin{proposition}\label{prop:B-regular}
The domain $\Omega$ in \eqref{eq:Omega} is B-regular.
\end{proposition}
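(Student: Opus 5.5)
To prove Proposition~\ref{prop:B-regular} I would use Sibony's characterization: a bounded domain $\Omega\subset\C^n$ is B-regular if and only if every boundary point $\xi\in\partial\Omega$ admits a peak function, meaning a function $u$ continuous on $\overline\Omega$ and plurisubharmonic on $\Omega$ with $u(\xi)=0$ and $u<0$ on $\overline\Omega\setminus\{\xi\}$. I would also use the fact that this property is local for bounded pseudoconvex domains. A local peak function at $\xi$ can be glued, by a standard $\max$ construction, with a bounded negative continuous plurisubharmonic function on $\Omega$; the function $\operatorname{Re}(\bar w_0w)-1$ used below is suitable for the gluing after adding a small multiple of $|z|^2+|w|^2-C$. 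First I describe $\partial\Omega$. Since $\Phi\to+\infty$ at $\partial\D$, the closure $\overline\Omega$ meets $\{|w|=1\}$ only in $\{z=0\}$. Hence $\partial\Omega$ splits into two kinds of points:
\begin{enumerate}
\item[(a)] points $(z_0,w_0)$ with $|w_0|<1$ and $|z_0|=e^{-\Phi(w_0)}>0$;
\item[(b)] points $(0,w_0)$ with $|w_0|=1$.
\end{enumerate}

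\emph{Points of type (b).} Here a global peak function is explicit. Put
\[
  u(z,w)=\operatorname{Re}(\bar w_0 w)-1 .
\]
This function is pluriharmonic, continuous on $\overline\Omega$, and $\le0$ there. It vanishes only where $w=w_0$, and on $\overline\Omega$ that forces $z=0$. So $u$ peaks exactly at $(0,w_0)$, and no localization is needed.

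\emph{Points of type (a).} Near such a point we have $z\ne0$, so $\Omega$ is defined locally by $\rho=\log|z|+\Phi(w)<0$ with $d\rho\neq0$. I would check strict pseudoconvexity directly. A complex tangent vector $(a,b)$ satisfies $a/(2z)+\Phi_w b=0$. Since $\log|z|$ is pluriharmonic, the Levi form of $\rho$ on $(a,b)$ equals $\Phi_{w\bar w}|b|^2$, which is positive by strict subharmonicity of $\Phi$ unless $b=0$. If $b=0$, the tangency condition forces $a=0$. So $\partial\Omega$ is smooth and strictly pseudoconvex near $(z_0,w_0)$, and the standard local peak function there is $\operatorname{Re}$ of minus the Levi polynomial, suitably adjusted. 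Combining this with localization gives a global continuous peak function.

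The main obstacle is making the localization step rigorous. I would either cite Sibony's theorem that B-regularity of a bounded domain follows from local B-regularity at each boundary point, or carry out the gluing by hand. For the gluing, take the local strictly plurisubharmonic peak function $\varphi$ on a small ball $U$ around $\xi$, and a global $\psi=\varepsilon(|z|^2+|w|^2-C)<0$. Then define $\max\{\varphi, \psi-\delta\}$ near $\xi$ and $\psi-\delta$ outside $U$, with $\delta$ chosen so that $\varphi<\psi-\delta$ on $\partial U\cap\overline\Omega$. Finally I would subtract $\psi(\xi)-\delta$ where necessary to normalize the value at $\xi$.
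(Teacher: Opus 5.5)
Your proposal is correct and is essentially the paper's proof. Both use Sibony's strong-barrier characterization and the global pluriharmonic barrier $\operatorname{Re}(\overline{\zeta}w)-1$ at the collapsed circle $\{0\}\times\partial\D$. At the smooth points, both compute the Levi form of $\log|z|+\Phi(w)$ (nonzero complex tangent vectors have nonzero $w$-component) and then glue a local barrier by a maximum.

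There is one small simplification in the gluing. A negative constant $c$ with $\max_{\partial U\cap\overline\Omega}\varphi<c<0$ works in place of $\psi-\delta$, which is what the paper does. Your final normalization step should be dropped, since $\max\{\varphi,\psi-\delta\}$ already vanishes at $\xi$; subtracting $\psi(\xi)-\delta$ would make the value at $\xi$ positive.
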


\begin{proof}
Since $\Phi$ tends to $+\infty$ at every point of $\partial\D$, the boundary
is the disjoint union
\begin{align*}
 S&=\{(z,w):w\in\D,\ |z|=e^{-\Phi(w)}\},\\
 T&=\{(0,\zeta):|\zeta|=1\}.
\end{align*}
Every point of $S$ is a smooth strictly pseudoconvex boundary point.  In
fact,
\[
                         r(z,w)=\log|z|+\Phi(w)
\]
is a smooth defining function there and $dd^cr=dd^c\Phi(w)$.  Every nonzero
complex tangent vector has a nonzero $w$-component, so the Levi form is
positive on the complex tangent space.  A local strong plurisubharmonic
barrier at such a point can be globalized by taking its maximum with a
negative constant outside a smaller neighbourhood.

For a cuspidal point $(0,\zeta)\in T$, the function
\begin{equation}\label{eq:tip-barrier}
                 b_\zeta(z,w)=\operatorname{Re}(\overline\zeta w)-1
\end{equation}
is a global strong plurisubharmonic barrier.  Indeed, it is continuous and
pluriharmonic, vanishes at $(0,\zeta)$, and is strictly negative at every
other point of $\overline\Omega$.  The last assertion uses the collapse of
the fibres: $(0,\zeta)$ is the only point of $\overline\Omega$ whose second
coordinate is $\zeta$.

Thus every boundary point admits a strong plurisubharmonic barrier.
Sibony's characterization of B-regularity~\cite{Sibony:87} proves the result.
\end{proof}

We now calculate the envelope of the positive pluriharmonic function
\begin{equation}\label{eq:W}
                            W(z,w)=P(w)
\end{equation}
on $\Omega$.

\begin{proof}[Proof of Theorem~\ref{thm:counterexample}]
Let
\[
                         H=\{0\}\times\D.
\]
If $u\in\PSH(\Omega)$ is bounded above and $u\leq W$, then its restriction
\[
                         v(w)=u(0,w)
\]
is either identically $-\infty$ or is a bounded-above subharmonic function
on $\D$ satisfying $v\leq P$.  For every
$\zeta\in\partial\D\setminus\{1\}$,
\[
             \limsup_{w\to\zeta}v(w)\leq0.
\]
The exceptional singleton has harmonic measure zero.  The extended maximum
principle, or equivalently resolutivity of the boundary datum which is zero
except at one point, gives $v\leq0$ on $\D$; compare~\cite[Theorem
3.6.9]{Ransford:95}.  Since the constant zero function is an admissible
minorant,
\begin{equation}\label{eq:V-on-H}
                              V(0,w)=0.
\end{equation}

For $\varepsilon>0$, define
\begin{equation}\label{eq:uepsilon}
 u_\varepsilon(z,w)
 =\max\{0,P(w)+\varepsilon\log|z|\},
\end{equation}
where $u_\varepsilon(0,w)=0$.  This function is plurisubharmonic on
$\Omega$.  Since $|z|<1$, it satisfies $u_\varepsilon\leq P=W$.  Moreover,
the definition of $\Omega$ gives
\[
                    \log|z|<-\Phi(w)\leq-P(w)^2.
\]
Consequently,
\begin{equation}\label{eq:uepsilon-bound}
 0\leq u_\varepsilon(z,w)
 \leq\max\{0,P(w)-\varepsilon P(w)^2\}
 \leq\frac1{4\varepsilon}.
\end{equation}
Thus $u_\varepsilon$ is bounded on both sides and belongs to the defining
family for $V$.  At every point with $z\neq0$,
\begin{equation}\label{eq:uepsilon-limit}
                 u_\varepsilon(z,w)\longrightarrow P(w)
                 \qquad(\varepsilon\downarrow0).
\end{equation}
Together with $V\leq W$, equations \eqref{eq:V-on-H} and
\eqref{eq:uepsilon-limit} prove
\begin{equation}\label{eq:V-formula}
 V(z,w)=
 \begin{cases}
   P(w),&z\neq0,\\
   0,&z=0.
 \end{cases}
\end{equation}
Since $H$ has empty interior and $W$ is continuous, it follows that
$V^*=W$.  In particular,
\[
                         V(0,0)=0<1=V^*(0,0),
\]
which completes the proof.
\end{proof}

\begin{remark}[The Jensen-hull mechanism]\label{rem:jensen}
The geometry behind the example can be seen directly.  For
$p=(0,w_0)\in H$, harmonic measure at $w_0$ on the collapsed boundary circle
\[
                         T=\{(0,\zeta):|\zeta|=1\}
\]
is a boundary Jensen measure for $p$, by restriction to the analytic disc
$H$.  For every finite $M$, the upper boundary trace of $W\wedge M$ on $T$
is zero except at $(0,1)$, and harmonic measure assigns that point mass zero.
Hence
\[
                 \int_T(W\wedge M)^*\,d\mu_{w_0}=0<W(p).
\]
The same measure therefore detects the loss for every truncation.  In this
sense, the Jensen hull of the collapsed boundary circle contains the whole
exceptional analytic disc.
\end{remark}

\begin{remark}[Tameness and quasiboundedness quasi-everywhere]\label{rem:qb}
Taking $\varepsilon=1/j$ in \eqref{eq:uepsilon} produces an increasing
sequence of bounded plurisubharmonic functions which converges to $W$ on
$\Omega\setminus H$ and to zero on $H$.  Thus $W$ is quasibounded
quasi-everywhere in the direct approximation sense, while
\eqref{eq:V-formula} shows that no family of bounded plurisubharmonic
minorants can recover it pointwise on $H$.  This is precisely the distinction
which disappears in the subharmonic envelope theorem.

The example remains within the tame class.  Indeed,
\[
                         G(z,w)=-\log|z|
\]
is a positive plurisuperharmonic function on $\Omega$, with value $+\infty$
on $H$, and \eqref{eq:Omega} gives
\[
                  G(z,w)>\Phi(w)\geq P(w)^2=W(z,w)^2.
\]
Thus $W^2$ has a plurisuperharmonic majorant, so~\cite[Theorem
3.2]{NilssonWikstrom:21} implies that $W$ is tame.  In fact, $G/W\geq W$
wherever $z\neq0$; hence $G/W\to+\infty$ as $W\to+\infty$. Equivalently, $G$ is
a nontrivial strong majorant in the sense of~\cite[Lemma 2.2]{Nilsson:22}, and
it strongly quasibounds $W$ in the terminology of~\cite[Definition
3.1]{Nilsson:26}.
\end{remark}

The following observation is the direct restricted-envelope version of the
strong-majorant argument in~\cite[Proposition 2.3]{Nilsson:22}.

\begin{proposition}[A pointwise strong-majorant criterion]
\label{prop:strong-majorant}
Let $W\in\PSH(\Omega)$ and suppose that a positive plurisuperharmonic
function $G$ strongly majorizes $W$ from above, in the sense that for every
$\varepsilon>0$ there is a constant $M_\varepsilon$ such that
\[
                         W\leq\varepsilon G+M_\varepsilon.
\]
If
\[
 V=\sup\{u:u\in\PSH(\Omega),\ u\leq W,\ \sup_\Omega u<+\infty\},
\]
then
\[
                         V=W\quad\hbox{on }\{G<+\infty\}.
\]
In particular, a finite-valued strong majorant forces $V=W$ everywhere.
\end{proposition}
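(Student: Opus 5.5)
The plan is to exhibit, for each $\varepsilon>0$, an explicit admissible competitor of the form
\[
 u_\varepsilon=W-\varepsilon G-M_\varepsilon .
\]
This mirrors the construction \eqref{eq:uepsilon} in the proof of Theorem~\ref{thm:counterexample}, where $\varepsilon\log|z|=-\varepsilon G$.

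First, $u_\varepsilon$ is plurisubharmonic. Since $G$ is plurisuperharmonic and positive, $-\varepsilon G$ is plurisubharmonic with values in $[-\infty,0)$. The sum of two plurisubharmonic functions is plurisubharmonic, and here no $\infty-\infty$ issue arises because $W<+\infty$ and $-\varepsilon G<+\infty$. Next, $u_\varepsilon\leq W$ because $G>0$ and we may take $M_\varepsilon\geq 0$; enlarging $M_\varepsilon$ is harmless. Finally, $u_\varepsilon$ is globally bounded above, since the hypothesis $W\leq\varepsilon G+M_\varepsilon$ gives exactly $u_\varepsilon\leq 0$ wherever $G<+\infty$. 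Where $G=+\infty$ and $W$ is finite, $u_\varepsilon=-\infty$. To avoid fuss at points where both $W$ and $G$ are infinite, I would either note that $W$ is never $+\infty$, or define $u_\varepsilon=-\infty$ on $\{G=+\infty\}$. That set is pluripolar, being the $+\infty$-set of a plurisuperharmonic function that is not identically $+\infty$, and $u_\varepsilon$ is a genuine psh function there, being the sum of two psh functions. So $u_\varepsilon$ belongs to the defining family of $V$.

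Now fix a point $x$ with $G(x)<+\infty$. Then
\[
 V(x)\geq u_\varepsilon(x)=W(x)-\varepsilon G(x)-M_\varepsilon .
\]
The obstacle is that $M_\varepsilon$ may blow up as $\varepsilon\to0$, so I cannot simply let $\varepsilon\to0$. The fix is to take $\max\{u_\varepsilon,W-N\}$ type truncations, which do not work directly because $W-N$ is not bounded above. Instead I would use the competitor
\[
 \max\{W-\varepsilon G-M_\varepsilon,\ -\varepsilon G\}\quad\text{no};
\]
this fails too. The better route is to replace $M_\varepsilon$ by exploiting the full family: for fixed $\varepsilon$ and any $\delta\in(0,\varepsilon)$ consider
\[
 v=W-\varepsilon G+c,
\]
and ask for $v\leq W$ together with boundedness above. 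Boundedness needs $W-\varepsilon G\leq$ const, which is guaranteed. So the real point is to choose the constant as small as possible, namely
\[
 c_\varepsilon=-\sup(W-\varepsilon G)\leq 0 .
\]

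The key step, and the main obstacle, is the limit. I would use the competitor $W-\varepsilon G$ itself, which already satisfies $W-\varepsilon G\leq W$ and is bounded above by $M_\varepsilon$ by the hypothesis. No subtraction of $M_\varepsilon$ is needed at all, since $u\leq W$ holds automatically from $G>0$. Hence
\[
 V(x)\geq W(x)-\varepsilon G(x)\quad\text{for every }\varepsilon>0 .
\]
Letting $\varepsilon\downarrow0$ gives $V(x)\geq W(x)$ when $G(x)<+\infty$, and $V\leq W$ is trivial. The final assertion is immediate, since if $G$ is finite-valued then $\{G<+\infty\}=\Omega$. The only care needed is the plurisubharmonicity of $W-\varepsilon G$ and the handling of the pluripolar set $\{G=+\infty\}$, as described in the second paragraph.
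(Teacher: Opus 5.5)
Your final argument is correct and is exactly the paper's proof: $W-\varepsilon G$ is itself an admissible competitor, since it is plurisubharmonic, bounded above by $M_\varepsilon$, and $\leq W$ because $G>0$, and letting $\varepsilon\downarrow0$ at points where $G<+\infty$ gives $V\geq W$. In a write-up, drop the intermediate detours (subtracting $M_\varepsilon$, the truncations, the constant $c_\varepsilon$), since none of them is needed.
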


\begin{proof}
For $\varepsilon>0$, the function
\[
                         W_\varepsilon=W-\varepsilon G
\]
is plurisubharmonic, is bounded above by $M_\varepsilon$, and satisfies
$W_\varepsilon\leq W$.  Hence $W_\varepsilon\leq V$.  If $G(p)<+\infty$,
then $W_\varepsilon(p)\to W(p)$ as $\varepsilon\downarrow0$, and therefore
$V(p)=W(p)$.
\end{proof}

For the strong majorant $G=-\log|z|$ in the counterexample,
$\{G=+\infty\}=H$, and \eqref{eq:V-formula} shows that this containment of
the defect set in the pole set is sharp.

\begin{remark}[The continuous Dirichlet problem]\label{rem:dirichlet}
Although the obstacle in the counterexample is tame, the example does not
contradict the possible existence of globally continuous solutions to the
Dirichlet problem for tame maximal plurisubharmonic functions.  Indeed, the
upper semicontinuous regularization of the restricted envelope is
\[
                             V^*=W=P(w),
\]
which is continuous and pluriharmonic throughout $\Omega$.  Only the raw
pointwise envelope $V$ is discontinuous, while $W$ itself is already a
globally continuous maximal solution.  Notice the subtlety that the strong
majorant $G=-\log|z|$ is identically $+\infty$ on $H$.  Its restriction
therefore gives no finite superharmonic majorant of the Poisson kernel on the
disc.  Tameness need not be inherited by an analytic subvariety contained in
the pole set of a strong majorant.  The example consequently answers the
pointwise restricted-envelope question negatively under tameness, but not
the distinct Dirichlet-continuity question.
\end{remark}

\section*{Statements and declarations}

\subsection*{Competing interests.}
The author has no relevant financial or non-financial interests to disclose.

\subsection*{Tool disclosure}
OpenAI Codex (GPT-5.6 Sol, accessed July 2026) was used as an interactive
research assistant for mathematical brainstorming and proof auditing.
The author reviewed all generated material, independently checked the
arguments and computations, and takes full responsibility for the content
of the paper.

\end{document}